\documentclass[11pt]{amsart}

\usepackage{amsmath}
\usepackage{amssymb}
\usepackage{graphicx}

\textwidth=33truepc
\textheight=50truepc

\hoffset = - 0.5in

\paperheight=257mm
\paperwidth=182mm

\newtheorem{theorem}{Theorem}[section]
\newtheorem{corollary}[theorem]{Corollary}

\theoremstyle {definition}

\numberwithin{equation}{section}

\renewcommand{\geq}{\geqslant}
\renewcommand{\leq}{\leqslant}

\title{Uniformly nonsquare Banach spaces have the fixed point property 3}

\author[Tim Dalby]{Tim Dalby}

\date{\today}

\keywords{fixed point property, uniformly nonsquare, nearly uniformly smooth}

\subjclass[2010]{46B10, 47H09, 47H10}

\email{tim\_dalby@bigpond.com}

\begin{document}

\parindent = 0pt
\parskip = 8pt

\begin{abstract}

Another proof that uniformly nonsquare Banach spaces have the fixed point property is presented.

\end{abstract}

\maketitle

\section{Introduction}

 In [5] Garc\'{i}a-Falset, Llorens-Fuster and Mazcu\~{n}an-Navarroa were the first to show that a uniformly nonsquare Banach space has the Fixed Point Property, FPP.  The proof is not direct as it uses the following chain of reasoning.  The notation and definitions are explained in the next section.

A Banach space, $X$, is uniformly nonsquare if and only if  $\rho'_X(0) < 1$, see [8].  This then implies that $X$ is reflexive and  $\Gamma_X'(0) < 1$ which in turn implies $RW(a, X) < 1 + a$ for some $a > 0$.  Finally, it was shown that $R(a, X) \leq RW(a, X)$ which allowed a result of Dom\'{i}nguez-Benavides, [3], to be used to obtain the fixed point property.

In summary, the proof goes along the following path. The reverse implications left out for ease of reading.

$X$ is uniformly nonsquare $\Rightarrow \rho'_X(0) < 1 \Rightarrow X \mbox{ is reflexive and } \Gamma_X'(0) < 1 \Rightarrow RW(a, X) < 1 + a \Rightarrow R(a, X) < 1 + a \Rightarrow X \mbox{ has the } FPP.$

This paper explores this idea in a little more detail.  In corollary 4.3 of [5] it was shown that, in a reflexive Banach space $X,$  $\Gamma_X'(0) < 1$ is equivalent to there exist $a > 0$ such that $RW(a, X) < 1 + a.$  This, in turn, is equivalent to $RW(a, X) < 1 + a$ for all $a > 0.$  See [1] for the proof and more details.  So taking $a$ equal a specific value like 1 will suffice.  In other words, in a reflexive Banach spaces, $\Gamma_X'(0) < 1$ is equivalent to $RW(1, X) < 2.$  To try to obtain an insight into the connection between uniformly nonsquare Banach spaces and the FPP, the proof below starts with $RW(1, X) < 2$ and ends with the FPP.  More background can be found in W. A. Kirk and B. Sims, [9].

It is important to make it clear that there is now a very direct proof of uniformly nonsquare implying the FPP courtesy of Dowling, Randrianantoanina and Turett, [4].

\section{Definitions}

Now for the definitions and notation.

The modulus of uniform smoothness is 

\[\rho_X(t) = \sup \left\{ \dfrac{\| x + t y \| + \| x - ty \|}{2} - 1: x, y \in B_X  \right\}\]

 where $t \geq 0$ and the related coefficient is
\[ \rho'_X(0) = \lim_{t \rightarrow 0^+} \frac{\rho_X(t)}{t}. \]

$X$ is defined to be uniformly smooth whenever $\rho'_X(0) = 0.$

In [8] Kato, Maligranda and Takahashi proved, among other interesting results, that a Banach space, $X,$ is uniformly nonsquare if and only if $\rho'_X(0) < 1.$

The modulus of nearly uniform smoothness is

\[ \Gamma_X(t) = \sup \left\{ \inf_{n > 1} \left( \frac{\| x_1 + tx_n \| + \| x_1 - tx_n \|}{2} - 1 \right) \right\}, \]

where $t \geq 0$ and the supremum is taken over all basic sequences $(x_n)$ in $B_X.$  Dom\'{i}nguez-Benavides introduced this modulus in [2].

Also in [2] it was shown that if $X$ is reflexive then

\[ \Gamma_X(t) = \sup \left\{ \inf_{n > 1} \left( \frac{\| x_1 + tx_n \| + \| x_1 - tx_n \|}{2} - 1 \right): (x_n) \mbox{ in } B_X, x_n \rightharpoonup 0 \right\}. \]

A key result from that paper was that a Banach space, $X,$ is nearly uniformly smooth if and only if $X$ is reflexive and 
\[ \Gamma_X'(0) = \lim_{t \rightarrow 0 ^+} \frac{\Gamma_X(t)}{t} = 0. \]

It is clear from the definitions that $\Gamma_X(t) \leq \rho_X(t) \mbox{ for all } t \geq 0.$

So $\Gamma_X'(0) \leq \rho'_X(0).$ Which means that for a reflexive Banach space, uniformly smooth implies nearly uniformly smooth.

Finally, two moduli that were introduced in the quest to find sufficient conditions for a Banach space to have the weak fixed point property, wFPP.

The first one can be found in [3].

\[ R(a, X) = \sup \left \{ \liminf_{n \rightarrow \infty}\| x_n + x \|: (x_n) \mbox{ in } B_X, x_n \rightharpoonup 0, \| x \| \leq a, D[(x_n)] \leq 1 \right \}. \]

where $D[(x_n)] = \limsup_{n \rightarrow \infty} \limsup_{m \rightarrow \infty} \| x_n - x_m \|.$

The second is a refinement of this modulus and first appeared in [5].

 $RW(a, X) =$
 \[ \sup \left \{ \left ( \liminf_{n \rightarrow \infty} \| x_n + x \| \right ) \wedge \left ( \liminf_{n \rightarrow\infty} \| x_n - x \| \right ): (x_n) \mbox{ in } B_X, x_n \rightharpoonup 0, \| x \| \leq a \right \} \]
 
In that paper it was proved that $R(a, X) \leq RW(a, X).$
 
Note that both $RW(a, X)$ and $R(a, X)$ are nondecreasing functions of $a.$

\section{Results}

\begin{theorem} 
Let $X$ be a Banach space where the dual unit ball, $B_{X^*}$, is $w^*$-sequentially compact. If  $RW(1, X) < 2$ then $X$ has the wFPP.
\end{theorem}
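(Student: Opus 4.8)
The quickest route is to reduce to the machinery already cited in the introduction. The plan is to observe that the hypothesis $RW(1,X)<2$, together with the inequality $R(a,X)\leq RW(a,X)$ recorded in Section 2, gives $R(1,X)\leq RW(1,X)<2=1+1$. Taking $a=1$, this is exactly the condition $R(a,X)<1+a$ appearing in Dom\'{i}nguez-Benavides' theorem [3], whose standing hypothesis is precisely that $B_{X^*}$ be $w^*$-sequentially compact. Invoking [3] then delivers the wFPP, and no further work is needed.

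If instead one wants the self-contained argument that the paper is really after --- connecting $RW$ to the FPP without passing through $R$ --- I would run the usual contradiction. Suppose $X$ fails the wFPP. Then there is a weakly compact convex set carrying a fixed-point-free nonexpansive map, and a standard Zorn's lemma argument produces a minimal such invariant set $K$, which is not a singleton; normalise so that $\mathrm{diam}(K)=1$. Extract an approximate fixed point sequence $(x_n)\subseteq K$, pass to a weakly convergent subsequence, and translate so that $x_n\rightharpoonup 0$ with $0\in K$; since $0\in K$ and $\mathrm{diam}(K)=1$ this forces $K\subseteq B_X$, so $(x_n)$ is an admissible sequence for the coefficient. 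The Goebel--Karlovitz lemma then gives $\lim_n\|x_n-y\|=1$ for every $y\in K$; in particular $\|x_n\|\to1$ and $\|x_n-x_m\|\to1$.

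The heart of the matter, and the step I expect to be the main obstacle, is to manufacture from this data a configuration that violates $RW(1,X)<2$. Concretely, I would aim to produce a weakly null sequence $(y_n)\subseteq B_X$ and a single point $x$ with $\|x\|\leq1$ for which \emph{both} one-sided limits $\liminf_n\|y_n+x\|$ and $\liminf_n\|y_n-x\|$ tend to $2$; such an asymptotically square pair contradicts $RW(1,X)<2$ directly. Building $(y_n)$ as differences $x_{2n}-x_{2n-1}$ (again weakly null, with norms tending to $1$) is routine, but pinning down the point $x$ so that the two estimates hold \emph{simultaneously} --- the role of the $\wedge$ in the definition of $RW$ --- is the delicate part. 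Here the $w^*$-sequential compactness of $B_{X^*}$ earns its place: passing to a $w^*$-limit of functionals norming the $y_n$ yields a functional that, combined with the minimality of $K$ and the Goebel--Karlovitz equalities, should locate the required $x$ and force the two-sided lower bound. Carrying out this last estimate cleanly is the only genuinely nontrivial point; everything before it is standard fixed-point-theoretic bookkeeping.
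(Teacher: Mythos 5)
Your first paragraph is, on its own, a correct proof of the theorem as stated: $R(1,X)\leq RW(1,X)<2=1+1$, and the theorem of Dom\'{i}nguez-Benavides [3] (certainly applicable under the $w^*$-sequential compactness hypothesis) yields the wFPP. But be aware that this is exactly the previously known indirect chain $RW(a,X)<1+a \Rightarrow R(a,X)<1+a \Rightarrow$ wFPP, via [5] and [3], that the paper recites in its introduction and explicitly sets out to replace; the paper's own proof runs the contradiction argument directly against $RW(1,X)<2$, never passing through $R(a,X)$ or citing [3]'s theorem as a black box. So as a certificate that the statement is true, your reduction works; as an argument in the spirit of the paper, it is the old route.

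Your second, ``self-contained'' argument has a genuine gap at precisely the step you flag, and the plan you sketch for closing it is not the one that succeeds. First, your target configuration --- a single weakly null sequence $(y_n)\subseteq B_X$ and a single point $x$ with \emph{both} $\liminf_n\|y_n+x\|$ and $\liminf_n\|y_n-x\|$ tending to $2$ --- is stronger than what failure of the wFPP delivers: failure only forces $RW(1,X)=2$ as a supremum, and no single admissible pair need come close to attaining it. Worse, after translating so that $x_n\rightharpoonup 0$, the natural candidate for $x$ has evaporated (the weak limit is $0$), and ``a $w^*$-limit of norming functionals should locate the required $x$'' is a hope, not an argument. Second, the paper does not aim at the value $2$ at all. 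It uses the standard Goebel--Karlovitz--Lin machinery, including the excursion through $l_\infty(X)/c_0(X)$ as in [3], to produce a sequence with the \emph{quantitative} normalisation $y_n\rightharpoonup y$, $D[(y_n)]\leq 1/2$, $\|y\|\leq 1/2$, where the contradiction is reached by showing $\lim_n\|y_n\|$ is uniformly away from $1$. The point in the $RW$ configuration is then simply the scaled weak limit $w=y/(1/2+\epsilon)$, the weakly null sequence is the scaled consecutive differences $(y_{n_k}-y_{n_{k+1}})/(1/2+\epsilon)$, extracted via lemma 3.2 of [5] so that $\limsup_k\|y_{n_k}-y_{n_{k+1}}\|\leq 1/2$, and the two one-sided lower bounds $\liminf\|(y_m-y_n)\pm y\|\geq\lim_m\|y_m\|$ come, respectively, from weak lower semicontinuity of the norm and from the $w^*$-convergent norming functionals --- the one place where you correctly guessed how the $w^*$-sequential compactness hypothesis is used. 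Scaling and monotonicity of $RW(a,X)$ in $a$ then give $\lim_m\|y_m\|\leq(1/2+\epsilon)RW(1,X)<1$, the desired contradiction. Without the $1/2$ bounds on $D[(y_n)]$ and $\|y\|$ --- which your minimal-invariant-set setup never produces, since it normalises $\mathrm{diam}(K)=1$ and translates the weak limit to $0$ --- the scaling trick is unavailable, and your sketch cannot be completed along the lines you indicate.
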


\begin{proof}

Let $X$ be a Banach space with $B_{X^*} \, w^*$-sequentially compact and also assume that $RW(1, X) < 2.$

The usual set up is to assume $X$ does not have the wFPP and arrive at a contradiction.  Using results from Goebel [6], Karlovitz [7] and Lin [10] plus an excursion to $l_\infty(X)/c_0(X)$ and then back to $X$ it can be shown that there exists a sequence with the following properties.
 \[ y_n \rightharpoonup y, \lim_{n \rightarrow \infty} \| y_n \| \mbox{ exists, } D[(y_n)] = \lim_{n \rightarrow \infty} \lim_{m \rightarrow \infty} \| y_n - y_m \| \leq \frac{1}{2} \mbox{ and } \| y \| \leq \frac{1}{2}. \]
 See for example [3].
 
To obtain the required contradiction, we need to show that $\lim_{n \rightarrow \infty} \| y_n \|$ is uniformly away from 1.

Using the weak lower semicontinuity of the norm, for all $m > 0,$ 
\[ \liminf_{n \rightarrow \infty} \| (y_m - y_n) + y \| \geq \| y_m \|. \]
So $\liminf_{m \rightarrow \infty} \liminf_{n \rightarrow \infty} \| (y_m - y_n) + y \| \geq \liminf_{m \rightarrow \infty} \| y_m \| = \lim_{m \rightarrow \infty} \| y_m \|.$\\

Now let $y_n^* \in S_{X^*}$ be such that $y_n^*(y_n) = \| y_n \| \mbox{ for all } n.$  

Because $B_{X^*}$ is $w^*$-sequentially compact we may assume, without loss of generality, that $y_n^*\stackrel{*}{\rightharpoonup} y^* \mbox { where } \| y^* \| \leq 1.$

Again, for all $m > 0,$
 \begin{align*}
\liminf_{n \rightarrow \infty} \| (y_m - y_n) - y \| & \geq \liminf_{n \rightarrow \infty} (-y_n^*)((y_m - y_n) - y))\\
& = \liminf_{n \rightarrow \infty} y_n^*(y_n) - y^*(y_m - y)\\
& = \liminf_{n \rightarrow \infty}\| y_n \| - y^*(y_m - y).
\end{align*}
Therefore, using $y_m - y \rightharpoonup 0,$
\[\liminf_{m \rightarrow \infty}\liminf_{n \rightarrow \infty}\| (y_m - y_n) - y \| \geq \liminf_{m \rightarrow \infty}\| y_m \|=\lim_{m \rightarrow \infty}\| y_m \|.\]
Thus
\[\liminf_{m \rightarrow \infty}\liminf_{n \rightarrow \infty}\| (y_m-y_n) - y \|\wedge \liminf_{m \rightarrow \infty}\liminf_{n \rightarrow \infty}\| (y_m - y_n) + y \| \geq \lim_{m \rightarrow \infty}\| y_m \|.\]

Using lemma 3.2 of [5], there exists a subsequence $(y_{n_k})$ such that
\[ \limsup_{k \rightarrow \infty}\|  y_{n_k} - y_{n_{k + 1}} \| \leq D[(y_n)] \leq \frac{1}{2} \qquad\qquad \dag\]
and 
\[ \liminf_{k \rightarrow \infty} \| y_{n_k} - y_{n_{k + 1}} + y \| \geq \liminf_{m \rightarrow \infty} \liminf_{n \rightarrow \infty} \| (y_n - y_m) + y \| \]
and 
\[ \liminf_{k \rightarrow \infty} \| y_{n_k} - y_{n_{k + 1}} - y \| \geq \liminf_{m \rightarrow \infty} \liminf_{n \rightarrow \infty} \| (y_n - y_m) - y \|. \]
Therefore
\[\liminf_{k \rightarrow \infty}\| y_{n_k} - y_{n_{k+1}} + y \| \wedge \liminf_{k\rightarrow
\infty}\| y_{n_k} - y_{n_{k+1}} - y \| \geq \lim_{m \rightarrow \infty}\| y_m \|.\]

Because $RW(1, X) < 2$ we have $\dfrac{1}{RW(1, X)} > \dfrac{1}{2} \mbox{ and } \dfrac{1}{RW(1, X)} - \dfrac{1}{2} > 0.$  So there exist an $\epsilon$ such that $0 < \epsilon < \dfrac{1}{RW(1, X)} - \dfrac{1}{2}.$

That is, there exists $\epsilon > 0 \mbox{ such that } \epsilon RW(1, X) < 1 - \dfrac{RW(1, X)}{2}$  or \newline $(1/2 + \epsilon)RW(1, X) < 1.$\\

From \dag \, there exists $k_0$ such that
\[ \| y_{n_k}  - y_{n_{k+1}} \| \leq \frac{1}{2} + \epsilon \mbox{ for all } k \geq k_0.\]

Consider $w_k = \dfrac{y_{n_{k_0 + k}} - y_{n_{k_0 + k + 1}}}{1/2 + \epsilon} \mbox{ for } k \geq 0.$
Then $w_k \rightharpoonup 0 \mbox{ and } \| w_k \| \leq 1 \mbox{ for all } k.$

Let $w = \dfrac{y}{ 1/2 + \epsilon}$ then $\| w \| = \dfrac{\| y \|}{1/2 + \epsilon} \leq \dfrac{1/2}{1/2 + \epsilon}.$

$RW\left( \dfrac{1/2}{1/2 + \epsilon},X \right)$
\begin{align*}
 & \geq \liminf_{k \rightarrow \infty}\| w_k + w \| \wedge \liminf_{k \rightarrow \infty}\| w_k - w \| \\
&=\dfrac{1}{1/2 + \epsilon}\left(\liminf_{k \rightarrow \infty}\|y_{n_{k_0 + k}} - y _{n_{k_0 + k + 1}} + y\| \wedge \liminf_{k \rightarrow \infty}\| y_{n_{k_0 + k}} - y_{n_{k_0 + k + 1}} - y\|\right) \\
&\geq \dfrac{1}{1/2 + \epsilon}\left(\liminf_{k \rightarrow \infty}\|y_{n_k} - y_{n_{k + 1}} + y\| \wedge \liminf_{k\rightarrow
\infty}\| y_{n_k} - y_{n_{k + 1}} - y \|\right) \\
&\geq \dfrac{1}{1/2 + \epsilon}\lim_{m\rightarrow \infty}\| y_m \| .
\end{align*}

So, using the property that $RW(a, X)$ is a nondecreasing function of $a,$
\begin{align*}
\lim_{m\rightarrow \infty}\| y_m \| &\leq (1/2 + \epsilon) RW \left(\frac{1/2}{1/2 + \epsilon}, X \right) \\
&\leq (1/2 + \epsilon) RW(1, X )\notag\\
& <1 .
\end{align*}
A contradiction.

\end{proof}

\begin{corollary}
If $X$ is a uniformly nonsquare Banach space then $X$ has the FPP.
\end{corollary}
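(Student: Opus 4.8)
The plan is to assemble the chain of implications already collected in the introduction, using Theorem 3.1 as the final link. First I would invoke the result of Kato, Maligranda and Takahashi [8] to pass from the hypothesis that $X$ is uniformly nonsquare to the inequality $\rho'_X(0) < 1$. From here two consequences are needed: that $X$ is reflexive, and that $\Gamma_X'(0) < 1$. The latter is immediate from the pointwise inequality $\Gamma_X(t) \leq \rho_X(t)$ recorded in Section 2, which gives $\Gamma_X'(0) \leq \rho'_X(0) < 1$. Reflexivity is the standard consequence of uniform nonsquareness and is exactly the implication flagged in the introductory chain.

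Next I would convert the smoothness-type inequality $\Gamma_X'(0) < 1$ into the geometric inequality $RW(1, X) < 2$. Since $X$ is reflexive, corollary 4.3 of [5] tells us that $\Gamma_X'(0) < 1$ is equivalent to the existence of some $a > 0$ with $RW(a, X) < 1 + a$; and by [1] this holds for some $a > 0$ if and only if it holds for every $a > 0$. Taking $a = 1$ yields $RW(1, X) < 2$, which is precisely the hypothesis of Theorem 3.1.

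To apply Theorem 3.1 I must also check that $B_{X^*}$ is $w^*$-sequentially compact. This is automatic in the present situation: since $X$ is reflexive, so is $X^*$, and on the dual of a reflexive space the weak and weak$^*$ topologies coincide, so every bounded sequence in $X^*$ admits a weak$^*$-convergent subsequence. With both hypotheses verified, Theorem 3.1 delivers the weak fixed point property for $X$.

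Finally I would upgrade wFPP to FPP. Because $X$ is reflexive, every nonempty bounded closed convex subset of $X$ is weakly compact, so in this setting the weak fixed point property coincides with the fixed point property, completing the argument. The substantive work has all been carried out in Theorem 3.1, so none of the remaining steps is a genuine obstacle; the only point requiring care is marshalling the reflexivity of $X$ correctly, since each equivalence in the chain---the characterisation via $RW$, the coincidence of weak and weak$^*$ sequential compactness, and the passage from wFPP to FPP---relies on it.
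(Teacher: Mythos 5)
Your proposal is correct and follows exactly the paper's own route: uniformly nonsquare $\Leftrightarrow \rho'_X(0) < 1$ via [8], hence reflexivity and $\Gamma_X'(0) < 1$, then corollary 4.3 of [5] together with [1] to obtain $RW(1, X) < 2$, and finally Theorem 3.1. You in fact spell out two steps the paper leaves implicit---why reflexivity gives $w^*$-sequential compactness of $B_{X^*}$, and why wFPP upgrades to FPP in a reflexive space---but the argument is the same.
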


\begin{proof}
Assume $X$ is a uniformly nonsquare Banach space.  This is equivalent to $\rho'_X(0) < 1.$   Uniformly nonsquare spaces are reflexive and so $B_{X^*}$ is $w^*$-sequentially compact and also $\rho'_X(0) < 1$ implies $\Gamma_X'(0) < 1.$  This last condition is equivalent to $RW(a, X) < 1 + a$ for some $a > 0.$  Finally, this condition is equivalent to $RW(1, X) < 2$ and the previous theorem can be invoked. 
\end{proof}

\end{document}